\documentclass[12pt,oneside,reqno]{amsart}

\hoffset=-0.7in \textwidth=6.4in \textheight=8.8in

\usepackage{amsmath,amssymb,amsthm,textcomp}
\usepackage{amsfonts,graphicx}
\usepackage[mathscr]{eucal}
\pagestyle{plain}
\usepackage{color}
\usepackage{csquotes}
\usepackage[backend=bibtex,%
firstinits=true,%
doi=false,%
isbn=true,%
url=false,%
maxnames=99]{biblatex}%

\vfuzz=30pt
\AtEveryBibitem{\clearfield{issn}}
\AtEveryCitekey{\clearfield{issn}}
\addbibresource{mybibfile.bib}
\interdisplaylinepenalty=0

\numberwithin{equation}{section}
\DeclareNameAlias{sortname}{last-first}
\theoremstyle{definition}

\addtolength{\topmargin}{-0.7in}
\addtolength{\textheight}{0.4in}

\numberwithin{equation}{section}


\newcommand{\ncom}{\newcommand}

\ncom{\beq}{\begin{equation}}
\ncom{\eeq}{\end{equation}}
\ncom{\bea}{\begin{eqnarray*}}
\ncom{\eea}{\end{eqnarray*}}
\ncom{\beqa}{\begin{eqnarray}}
\ncom{\eeqa}{\end{eqnarray}}
\ncom{\nno}{\nonumber}
\ncom{\non}{\nonumber}
\ncom{\ds}{\displaystyle}
\ncom{\half}{\frac{1}{2}}
\ncom{\mbx}{\makebox{.25cm}}
\ncom{\hs}{\mbox{\hspace{.25cm}}}
\ncom{\rar}{\rightarrow}
\ncom{\Rar}{\Rightarrow}
\ncom{\noin}{\noindent}
\ncom{\bc}{\begin{center}}
\ncom{\ec}{\end{center}}
\ncom{\sz}{\scriptsize}
\ncom{\rf}{\ref}
\ncom{\s}{\sqrt{2}}
\ncom{\sgm}{\sigma}
\ncom{\Sgm}{\Sigma}
\ncom{\psgm}{\sigma^{\prime}}
\ncom{\dt}{\delta}
\ncom{\Dt}{\Delta}
\ncom{\lmd}{\lambda}
\ncom{\Lmd}{\Lambda}
\ncom{\Th}{\Theta}
\ncom{\e}{\eta}
\ncom{\eps}{\epsilon}
\ncom{\pcc}{\stackrel{P}{>}}
\ncom{\lp}{\stackrel{L_{p}}{>}}
\ncom{\dist}{{\rm\,dist}}
\ncom{\sspan}{{\rm\,span}}
\ncom{\re}{{\rm Re\,}}
\ncom{\im}{{\rm Im\,}}
\ncom{\sgn}{{\rm sgn\,}}
\ncom{\ba}{\begin{array}}
\ncom{\ea}{\end{array}}
\ncom{\hone}{\mbox{\hspace{1em}}}
\ncom{\htwo}{\mbox{\hspace{2em}}}
\ncom{\hthree}{\mbox{\hspace{3em}}}
\ncom{\hfour}{\mbox{\hspace{4em}}}
\ncom{\vone}{\vskip 2ex}
\ncom{\vtwo}{\vskip 4ex}
\ncom{\vonee}{\vskip 1.5ex}
\ncom{\vthree}{\vskip 6ex}
\ncom{\vfour}{\vspace*{8ex}}
\ncom{\norm}{\|\;\;\|}
\ncom{\integ}[4]{\int_{#1}^{#2}\,{#3}\,d{#4}}
\ncom{\vspan}[1]{{{\rm\,span}\{ #1 \}}}
\ncom{\dm}[1]{ {\displaystyle{#1} } }
\ncom{\ri}[1]{{#1} \index{#1}}

\newtheorem{theorem}{\bf Theorem}[section]

\newtheoremstyle
    {remarkstyle}
    {}
    {11pt}
    {}
    {}
    {\bfseries}
    {:}
    {     }
    {\thmname{#1} \thmnumber{#2} }

\theoremstyle{remarkstyle}



\def\eps{\varepsilon}

\begin{document}
\title{A Probabilistic Proof of The Multinomial Theorem}
\author[Kuldeep Kumar Kataria]{Kuldeep Kumar Kataria}
\address{Kuldeep Kumar Kataria, Department of Mathematics,
 Indian Institute of Technology Bombay, Powai, Mumbai 400076, INDIA.}
 \email{kulkat@math.iitb.ac.in}
\thanks{The research of K. K. Kataria was supported by UGC, Govt. of India.}
\keywords{Mathematical induction; Combinatorial analysis; Multinomial distribution.}
\begin{abstract}
In this note, we give an alternate proof of the multinomial theorem using a probabilistic approach. Although the multinomial theorem is basically a combinatorial result, our proof may be simpler for a student familiar with only basic probability concepts.
\end{abstract}

\maketitle
\section{Introduction}
The multinomial theorem is an important result with many  applications in mathematical statistics and computing. It expands $(x_1+x_2+\ldots+x_m)^n$, for integer $n\geq0$, into the sum of the products of integer powers of real numbers $x_1,x_2,\ldots,x_m$. The prevalent proofs of the multinomial theorem are either based on the principle of mathematical induction (see [2, pp. 78-80]) or on counting arguments (see [1, p. 33]). The proof by induction make use of the binomial theorem and is a bit complicated. Rosalsky [4] provided a probabilistic proof of the binomial theorem using the binomial distribution. Indeed, we use the multinomial distribution (see [3, 197-198]) to prove the multinomial theorem. For students familiar with basic probability theory, our proof may be simpler than the existing proofs by mathematical induction and principles of combinatorics. Also, this proof avoids the use of the binomial theorem. The multinomial distribution is modeled as follows.\\
Consider an experiment consisting of $n$ independent trials. The outcome of each trial results in the occurrence of one of the $m$ mutually exclusive and exhaustive events $E_1,E_2,\ldots,E_m$. For each $i=1,2,\ldots,m$, let $p_i$ be the constant probability of the occurrence of the event $E_i$ and $X_i$ be the random variable that denotes the number of times event $E_i$ has occurred. Then, the joint probability mass function of the random variables $X_1,X_2,\ldots,X_m$ is
\begin{equation}\label{1.1}
P\left\{X_1=k_1,X_2=k_2,\ldots,X_m=k_m\right\}=n!\prod_{j=1}^m\frac{p_j^{k_j}}{k_j!},
\end{equation}
where $\sum_{i=1}^mk_i=n$. Also, since (\ref{1.1}) is a valid statistical distribution, we have
\begin{equation}\label{1.2}
1=\sum_{\sum_{i=1}^mk_i=n}n!\prod_{j=1}^m\frac{p_j^{k_j}}{k_j!}.
\end{equation}
Next, we state and prove the multinomial theorem.
\section{A probabilistic proof of the multinomial theorem}
\begin{theorem}[Multinomial]
Let $n$ be any non negative integer and $x_1,x_2$, $\ldots,x_m$ be real numbers. Then 
\begin{equation*}\label{t2.1}
(x_1+x_2+\ldots+x_m)^n=\sum_{\sum_{i=1}^mk_i=n}\frac{n!}{k_1!k_2!\ldots k_m!}x_1^{k_1}x_2^{k_2}\ldots x_m^{k_m},
\end{equation*}
where $k_i$'s are non negative integers and $0^0$ is interpreted as unity.
\end{theorem}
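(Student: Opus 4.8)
The plan is to read off the multinomial theorem directly from the normalization identity (\ref{1.2}), which states that the multinomial probabilities sum to one. The crucial observation is that the factors $p_j^{k_j}/k_j!$ appearing there already encode the combinatorial coefficients $n!/(k_1!\cdots k_m!)$ of the desired expansion, so the entire task reduces to converting the probabilities $p_j$ into arbitrary real arguments $x_j$. No appeal to the binomial theorem or to induction is then required; the probabilistic content is packaged entirely in (\ref{1.2}).

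First I would treat the case of strictly positive reals. Suppose $x_1,\ldots,x_m>0$ and set $S=x_1+\cdots+x_m>0$. Defining $p_i=x_i/S$ produces nonnegative numbers with $\sum_{i=1}^m p_i=1$, so they form a legitimate multinomial parameter vector and (\ref{1.2}) applies. Substituting $p_j=x_j/S$ and using $\sum_{j=1}^m k_j=n$ to extract the common factor $S^{-n}$ gives
\[
1=\frac{1}{S^n}\sum_{\sum_{i=1}^m k_i=n}\frac{n!}{k_1!k_2!\cdots k_m!}\,x_1^{k_1}x_2^{k_2}\cdots x_m^{k_m}.
\]
Multiplying through by $S^n=(x_1+\cdots+x_m)^n$ then establishes the claimed identity for every strictly positive choice of the $x_i$.

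The remaining, and principal, obstacle is to discard the positivity restriction, since the theorem is asserted for all real $x_1,\ldots,x_m$. Here I would invoke a polynomial-identity argument: both sides of the claimed formula are polynomials in the $m$ real variables $x_1,\ldots,x_m$, and we have just verified that they agree on the open positive orthant $\{x_i>0 \text{ for all } i\}$. Because the difference of two polynomials that vanishes on a nonempty open subset of $\R^m$ must vanish identically, the two polynomials coincide coefficient by coefficient, and the identity therefore holds for all real arguments. Finally I would settle the boundary conventions: the case $n=0$ reduces both sides to $1$, while whenever some $x_i=0$ the corresponding factor is $x_i^{0}=0^0=1$, precisely as the stated convention prescribes, so the passage from the open orthant to arbitrary reals loses nothing.
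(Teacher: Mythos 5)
Your proposal is correct and follows essentially the same route as the paper: substitute $p_i = x_i/(x_1+\cdots+x_m)$ into the normalization identity (\ref{1.2}) of the multinomial distribution to obtain the expansion on the positive orthant, then extend to all reals by observing that a polynomial vanishing on a nonempty open subset of $\R^m$ vanishes identically. The only cosmetic difference is that the paper first expands $(x_1+\cdots+x_m)^n$ by distributivity with undetermined coefficients $C(n,k_1,\ldots,k_m)$ and then matches coefficients, whereas you compare the two sides directly; the underlying argument is the same.
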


\begin{proof}
Let us consider
\begin{equation*}\label{tt2.1}
(x_1+x_2+\ldots+x_m)^n=\underset{n\ \mathrm{times}}{\underbrace{(x_1+x_2+\ldots+x_m)\ldots(x_1+x_2+\ldots+x_m)}}.
\end{equation*}
By using the distributive property in the right hand side of the above equation, it follows for all real numbers $x_i$'s that
\begin{equation}\label{2.1}
(x_1+x_2+\ldots+x_m)^n=\sum_{\sum_{i=1}^mk_i=n}C(n,k_1,k_2,\ldots, k_m)x_1^{k_1}x_2^{k_2}\ldots x_m^{k_m},
\end{equation}
where $C(n,k_1,k_2,\ldots k_m)$ are positive integers and $k_i$'s are non negative integers satisfying $\sum_{i=1}^mk_i=n$. We just need to show that
\begin{equation}\label{2.2}
C(n,k_1,k_2,\ldots, k_m)=\frac{n!}{k_1!k_2!\ldots k_m!}.
\end{equation}
Assume $x_i>0$ for all $i=1,2,\ldots,m$ and define
\begin{equation}\label{2.3}
p_i=\frac{x_i}{x_1+x_2+\ldots+x_m}.
\end{equation}
Clearly, $0<p_i<1$ and $\sum_{i=1}^{m}p_i=1$.
On substituting (\ref{2.3}) in (\ref{1.2}), we obtain for positive reals
\begin{equation}\label{2.4}
(x_1+x_2+\ldots+x_m)^n=\sum_{\sum_{i=1}^mk_i=n}\frac{n!}{k_1!k_2!\ldots k_m!}x_1^{k_1}x_2^{k_2}\ldots x_m^{k_m}.
\end{equation}
Finally, subtracting (\ref{2.4}) from (\ref{2.1}),
\begin{equation}\label{2.5}
\sum_{\sum_{i=1}^mk_i=n}\left(C(n,k_1,k_2,\ldots, k_m)-\frac{n!}{k_1!k_2!\ldots k_m!}\right)x_1^{k_1}x_2^{k_2}\ldots x_m^{k_m}=0,\ \ x_i>0.
\end{equation}
Since (\ref{2.5}) is a zero polynomial in $m$ variables, (\ref{2.2}) follows and the proof is complete.
\end{proof}

\end{document}